\newtheorem{theorem}{Theorem}
\newtheorem{conjecture}[theorem]{Conjecture}
\newtheorem{problem}[theorem]{Problem}
\begin{document}
\author{Kate Juschenko} 
\address{Kate Juschenko, Department of Mathematics,
Northwestern University, 2033 Sheridan Road, Evanston, IL 60208, USA}
\email{kate.juschenko@gmail.com}

\title{Liouville property of strongly transitive actions}
\begin{abstract}
Liouville property of actions of discrete groups can be reformulated in terms of existence of co-F$\o$lner sets. Since every action of amenable group is Liouville, the property can be used for proving  non-amenability. There are many groups that are defined by strongly transitive actions. In some cases amenability of such groups is an open problem. We define $n$-Liouville property of  action to be Liouville property of point-wise action of the group on the sets of cardinality $n$.  We reformulate $n$-Liouville property in terms of additive combinatorics and prove it for $n=1, 2$. The case $n\geq 3$ remains open.

\end{abstract}

\maketitle

\section{Introduction}

Let $G$ be a countable discrete group acting on a set $X$. A probability measure $\mu$ on $G$ is non-degenerate if $supp(\mu)$
generate  $G$ as semigroup. Define transition probabilities from $x$ to $y$ in $X$ induce by the measure $\mu$ by
$$p_{\mu}(x,y)=\sum\limits_{g\in G, gx=y}\mu(g).$$

%For the simplicity of the notations we write $\mu\cdot x=p_{\mu}(x,\cdot)$.

A function $f:X\to\mathbb{R}$ is ${\mu}$-harmonic if $$f(x)=\sum_{y\in X}f(y)p_{\mu}(x,y).$$

The action of $G$ on $X$ is  called $\mu$-Liouville if all bounded $\mu$-harmonic
functions are constant.  In particular, this definition implies that $\mu$-Liouville actions are transitive. We call an action Liouville
if there is a non-degenerate measure $\mu$ on $G$ which makes it $\mu$-Liouville.

One of the main motivations to study Liouville actions is their relation to amenability.  A  renowned result of Kaimanovich and Vershik, \cite{KaimVersh}, states that the group is not amenable if and only if the action of the group on itself is not Liouville. However, if a transitive action of $G$ on a set is not Liouville then the action of $G$ on itself is not Liouville either. This follows from a simple fact that if $f:X\rightarrow \mathbb{R}$ is $\mu$-Liouville then for every $x\in X$ the function $f_x:G\rightarrow \mathbb{R}$ defined by $f_x=f(gx)$ is $\mu$-Liouville. Thus, having a bounded non-constant $\mu$-harmonic function on $X$ one obtains  a bounded non-constant $\mu$-harmonic on $G$.

This approach to non-amenability have been recently used by
Kaimanovich, \cite{Kaimanovich}, as a suggested approach to show non-amenability of Thompson group F.  In particular, Kaimanovich
showed that  for every finitely supported non-degenerate measure $\mu$ on Thompson
group F the action of the group on dyadic rationals is not $\mu$-Liouville. In \cite{JZ}, Zheng and the author showed that this action is Liouville. An action of $G$ on $X$ is called strongly transitive if for every finite subsets $E$ and $F$ of $X$ of the same cardinality there is $g$ in $G$ such that $g(E)=F$.  The main property of the Thompson group F that was used in \cite{JZ} is strong transitivity of order preserving action on a totally ordered set and the  existence of an element in the group with an infinite orbit. \\

This paper aims to study Liouville property of actions on the susbsets of a fixed cardinality. An action of a group $G$ on a set $X$ is called $n$-Liouville if there is a non-degenerate measure $\mu$ on the subsets of $X$ of cardinality $n$ such that every bounded $\mu$-harmonic function is constant. One of the main ingredients will be the following theorem/definition of Liouville action.\\

Let $\mu$ be a finitely additive measure on $G$ and $x\in X$, denote by $(m\cdot x)$ the finitely additive measure on $X$ given by
 $(m\cdot x) (A)=m(\{g\in G:gx\in A\})$.

\begin{theorem}\label{thm1}
Let $G$ act transitively on a set $X$, then the following are equivalent:

\begin{enumerate}
\item the action is Liouville, i.e., there exists a non-degenerate symmetric measure $\mu$ on $G$ such that $X$ is $\mu$-Liouville;\\ \label{c1}
\item there exists a finitely additive measure $m$ on $G$ such that $m\cdot x=m\cdot y$ for any $x, y$ on $X$.\\ \label{c2}
\item \label{cond4} for every $\varepsilon>0$ and for every finite $F\subseteq X$ there exists a finite set $E\subset G$ such that we have
$$|Ex\Delta Ey|\leq \varepsilon |E| \text{ for every } x,y\in F. $$
\end{enumerate}

\end{theorem}

One should compare the last condition with the definition of amenable action, i.e., the existence of F\o lner sets: \\

{\it For every $\varepsilon>0$ and for every finite $E\subseteq G$ there exists a finite set $F\subset X$ such that we have
$$|gF\Delta hF|\leq \varepsilon |F| \text{ for every } g,h\in E. $$}

We use the condition (\ref{cond4}) to show that the action of Thompson group F on the dyadic rationals is $1$- and $2$-Liouville. However, the case of $n$-Liouville property for $n\geq 3$ is not currently feasible for us. This question can be reformulated in terms of additive combinatorics. It is tempting to conjecture that Thompson group F does not have $3$-Liouville property. A negative answer to the following question implies that Thompson F does not have $3$-Liouville action, and thus  is not amenable: \\

{\it Is it true that for every $\varepsilon>0$ there is a set $V\subset \mathbb{N}\times\mathbb{N}\times\mathbb{N}$ such that

\begin{gather*}\label{con1}|\{(p_1(x),p_2(x)):x\in V\}\cap \{(p_2(x),p_3(x)):x\in V\}\cap\\ \{(p_1(x)+p_2(x),p_3(x)):x\in V\}\cap \{(p_1(x),p_2(x)+p_3(x)):x\in V\}| \geq (1-\varepsilon) |V|\end{gather*}

where $p_i$ is a projection on the corresponding coordinate. Here we assume possibility that $p_i(x)=p_i(y)$ for distinct $x$ and $y$, and consider $p_i(x)$ as a set with multiplicity.}\\

The question above is open. It is also a subquestion of the following question, which in its turn equivalent to $3$-Liouville property. \\

{\it For every $n\geq 3$ and for every  $\varepsilon>0$ there is a finite set $V$ in $\mathbb{N}\times\mathbb{N}\times\ldots \times\mathbb{N}$ (product taken n times), such that 
\begin{gather*}|\bigcap_{1\leq k\leq n; 1\leq i\leq k; k+1\leq m\leq n}\{(p_i(x)+p_{i+1}(x)+\ldots+p_{k}(x),p_{k+1}(x)+\ldots+p_{m}(x)):x\in V\}|\\ \geq (1-\varepsilon) |V|?
\end{gather*}}

Moreover, the first question (after proper reordering of the set $W$) implies that for every $\varepsilon>0$ there must exist a sequence $a_1,\ldots, a_n$ such that
\begin{align*}|\{(a_i,a_{i+1}):1\leq i\leq n-1\}&\cap \{(a_j+a_{j+1},a_{j+2}):1\leq j\leq n-2\}\\&\cap   \{(a_k,a_{k+1}+a_{k+2}):1\leq k\leq n-2\}|\\&\geq (1-\epsilon)n.
\end{align*}
However, we are unable to build such a sequence or prove that it does not exist.\\ 

 A reformulation of $n$-Liouville property is very similar to the one of $3$-Liouville. Instead of intersecting  elements of  $\mathbb{N}\times\mathbb{N}$ one considers vectors of dimension $n-1$. We will specify it in the later section. \\
 
{\bf Acknowledgements:} We are grateful to Tianyi Zheng and Terrence Tao for numerous discussions on Liouville property and existence of the sets. The Theorem \ref{thm1} was partially suggested by Vadim Kaimanovich at workshop on amenability at AIM.  The Liouville property is also suggested to be called lamenable by L. Bartholdi.

\section{F\o lner type sets and Liouville actions}

Let $\mu$ be a finitely additive measure on $G$ and $x\in X$, denote by $(m\cdot x)$ the finitely additive measure on $X$ given by
 $(m\cdot x) (A)=m(\{g\in G:gx\in A\})$.

\begin{theorem}\label{thm1}
Let $G$ act transitively on a set $X$, then the following are equivalent:

\begin{enumerate}
\item the action is Liouville, i.e., there exists a non-degenerate symmetric measure $\mu$ on $G$ such that $X$ is $\mu$-Liouville;\\ \label{c1}
\item there exists a finitely additive measure $m$ on $G$ such that $m\cdot x=m\cdot y$ for any $x, y$ on $X$.\\ \label{c2}
\item \label{cond4} for every $\varepsilon>0$ and for every finite $F\subseteq X$ there exists a finite set $E\subset G$ such that we have
$$|Ex\Delta Ey|\leq \varepsilon |E| \text{ for every } x,y\in F. $$
\end{enumerate}

\end{theorem}
\begin{proof}

Condition (\ref{c1}) implies  (\ref{c2}): if $\mu$ be a measure such that the action on $X$ is $\mu$-Liouville, then convolution powers of approximate a mean on G such
that (\ref{c2}) is satisfied. To show the converse, given such a mean $m$ on G, let $v_n$ be a sequence of finitely supported
probability measures that approximate $m$, then one can build a measure using the Kaimanovich-Vershik method (or as in \cite{JZ}) which is Liouville. 

The equivalence of (\ref{cond4}) and (\ref{c2}) follows from the same considerations as in the proof of equivalence of  existence of F\o lner sets and approximately invariant $l_1$-functions (Reiter's condition), see \cite{Kbook} for example.
\end{proof}

\section{$1$- and $2$-Liouville actions}
The $1$-Liouville property was shown in \cite{JZ}. In this section we will consider $2-$Liouville property. The case of $1$-Liouville also follows from our construction.

An action of $G$ on $X$ is called strongly transitive if for every finite subsets $E$ and $F$ of $X$ of the same cardinality there is $g$ in $G$ such that $g(E)=F$. Denote by $\mathcal{P}_n(X)$ the set of all subsets of $X$ of cardinality $n$. 

We will consider the group $F_\mathbb{R}$ of all piece-wise linear homeomorphisms of $\mathbb{R}$ with slopes as powers of $2$ and breaking points of the first derivative in dyadic rationals. Thompson group $F$ is the subgroup of $F_\mathbb{R}$ made of the homeomorphisms that are identity outside of the interval $[0,1]$. It contains many other copies of Thompson group $F$, for example subgroups $G_n$ that are identity outside of $[-2^n,2^n]$. The group $F_\mathbb{R}$ itself is an abelian extension of the union of $G_n$. Thus, Thompson group $F$ is amenable if and only if $F_\mathbb{R}$ is amenable. The consideration of $F_\mathbb{R}$ allows scaling of dyadic rationals to natural numbers.

\begin{theorem}\label{thm2} The action of Thompson group F and the group $F_\mathbb{R}$ on dyadic rationals is $2$-Liouville.
\end{theorem}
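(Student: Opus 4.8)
The plan is to verify condition~(\ref{cond4}) of Theorem~\ref{thm1} for the action of $F_{\mathbb R}$ on $\mathcal P_2(\mathbb Z[1/2])$ — which is transitive, since $F_{\mathbb R}$ acts transitively on strictly increasing tuples of dyadic rationals of any fixed length — and then to run the same construction inside $(0,1)$ to get the statement for Thompson's $F$. So fix a finite family $\mathcal F=\{\xi_1,\dots,\xi_k\}\subseteq\mathcal P_2(\mathbb Z[1/2])$ and $\varepsilon>0$; we must produce a finite multiset $E\subset F_{\mathbb R}$ with $|E\xi_i\,\Delta\,E\xi_j|\le\varepsilon|E|$ for all $i,j$. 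Put $U:=\bigcup_i\xi_i=\{u_1<u_2<\dots<u_s\}$ and write $\xi_i=\{u_{\alpha_i},u_{\beta_i}\}$ with $\alpha_i<\beta_i$.

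I would first recast the problem combinatorially. Since $F_{\mathbb R}$ is transitive on strictly increasing $s$-tuples of dyadics, prescribing a finite multiset $E\subset F_{\mathbb R}$ amounts — via $e\leftrightarrow(e(u_1),\dots,e(u_s))$ — to prescribing a finite multiset $\Omega$ of strictly increasing $s$-tuples $v=(v_1<\dots<v_s)$ of dyadics, and then $E\xi_i$ equals, as a multiset of $2$-subsets, the projection $\operatorname{proj}_{(\alpha_i,\beta_i)}(\Omega):=\{\{v_{\alpha_i},v_{\beta_i}\}:v\in\Omega\}$. So it suffices to build a finite multiset $\Omega$ of strictly increasing integer $s$-tuples whose projections $\operatorname{proj}_{(\alpha_i,\beta_i)}(\Omega)$ are pairwise within $\varepsilon|\Omega|$ in symmetric difference.

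For the choice of $\Omega$, encode a $2$-subset $\{x,x+g\}$ by the pair $(\text{position }x,\text{ gap }g)$, so the projection along the chord $(\alpha,\beta)$ reads off position $v_\alpha$ and gap $v_\beta-v_\alpha$. I would take
\[
\Omega=\bigl\{\,(v_1,\,v_1+d,\,v_1+2d,\,\dots,\,v_1+(s-1)d)\;:\;0\le v_1\le L,\ d\in\mathcal S\,\bigr\},
\]
where $\mathcal S=\bigl\{\prod_{p\ \mathrm{prime},\,p\le s}p^{e_p}:0\le e_p<B\bigr\}$ is the set of ``$s$-smooth'' integers with bounded exponents and $B\ll L$ are large parameters to be fixed. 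The projection along $(\alpha_i,\beta_i)$ is then $\bigl(v_1+(\alpha_i-1)d,\ (\beta_i-\alpha_i)d\bigr)$. Because $\beta_i-\alpha_i\le s-1$ has all prime factors $\le s$, multiplication by $\beta_i-\alpha_i$ carries $\mathcal S$ onto a set overlapping $\mathcal S$ in all but an $O(\pi(s)\log s/B)$-fraction, so the gap--marginal $U\bigl((\beta_i-\alpha_i)\mathcal S\bigr)$ lies within $O(\pi(s)\log s/B)$ of $U(\mathcal S)$ in total variation; and since the position shift $(\alpha_i-1)d$ never exceeds $s\max\mathcal S$, the whole projection lies within $O(s\max\mathcal S/L)+O(\pi(s)\log s/B)$ of the product $U([0,L])\otimes U(\mathcal S)$. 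Choosing $B$ large, then $L$ large relative to $\max\mathcal S$, forces every pairwise symmetric difference below $\varepsilon|\Omega|$. Rescaling $\Omega$ into $(0,1)$ by $x\mapsto 2^{-M}x$ and invoking transitivity of $F$ on increasing $s$-tuples of dyadics in $(0,1)$ gives the statement for Thompson's $F$; the $1$-Liouville case is the degenerate version with $\mathcal S=\{1\}$.

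The step I expect to be the crux — and the reason $n=2$ is tractable while $n\ge 3$ is open — is precisely the design of $\Omega$: one must simultaneously make the ``position'' coordinates of all the projections statistically indistinguishable, which forces the base range $[0,L]$ to dwarf every gap, and the ``gap'' coordinates statistically indistinguishable, which forces the common differences to range over a set essentially invariant under multiplication by every small integer $\beta_i-\alpha_i$ that occurs — hence the $s$-smooth numbers, since powers of $2$ alone are not closed under multiplication by $3,5,\dots$. Once the shape of $\Omega$ is fixed the total-variation bookkeeping is routine. What breaks for $n\ge 3$ is that a configuration then carries several gaps, which are distinct linear combinations of the consecutive increments, and no single smoothing of these scalars is known to equalise all the required projections at once.
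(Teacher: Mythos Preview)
Your argument is correct and follows essentially the same route as the paper: reduce via strong transitivity to a combinatorial problem about $s$-tuples, encode a $2$-subset by (position, gap), and make the gap distributions match by letting the common increments range over a multiplicatively almost-invariant set while making the positions match by an additive averaging. The only cosmetic differences are that the paper handles gaps first and then fixes positions by post-composing with translations $x\mapsto x+1$ (a two-step construction), whereas you fold both into a single $\Omega$; and the paper allows arbitrary fixed increments $(r_1,\dots,r_{n-1})$ with an abstract multiplicative F{\o}lner set $A$, whereas you specialise to arithmetic progressions (all $r_i=1$) and write down the explicit $s$-smooth set $\mathcal S$.
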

\begin{proof}Denote by  $X$ the set of dyadic rationals for the case of $F_\mathbb{R}$ and dyadic rationals intersected with $[0,1]$ for Thompson group F.
Let us firstly consider the group $F_\mathbb{R}$ and a set $V$ of all subsets of cardinality $2$.   For every $\varepsilon>0$ we have to find a finite subset $E\subset V$ such that 
\begin{gather}\label{cc1}|Ex\Delta Ey|\leq \varepsilon |E| \text{ for every }x,y\in V.\end{gather}
By multiplying the set $E$ on the right by an element of the form $f(x)=2^i x$ with $i$ large enough, we can assume that all elements of $V$ are the subsets of natural numbers.
For $x=\{x_1,x_2\}\in V$ define $\hat{x}=|x_2-x_1|$, and for a subset $Q$ of $\mathcal{P}_2(X)$ denote $\widehat{Q}=\{\hat{x}:x\in Q\}$ and consider this set with multiplicities. In particular, the condition (\ref{cc1}) obviously implies 
\begin{gather}\label{cc2}|\widehat{Ex}\Delta \widehat{Ey}|\leq \varepsilon |E| \text{ for every }x,y\in V.\end{gather}

We claim that the existence of sets that satisfy the condition (\ref{cc2}) implies the existence of the sets that satisfy the condition (\ref{cc1}). Fix $\varepsilon>0$ and let $E$ be a set that satisfy (\ref{cc2}). Obviously, $Ex$ and $Ey$ might not even intersect. However, we can move the sets $Ex$ and $Ey$ around the way that the intersection will be large. To do that define $g(x)=x+1$. Then the set $E'=\{g,g^2,\ldots,g^n\}\cdot E$ will satisfy 
$$|E'x\Delta E'y|\leq \varepsilon' |E'| \text{ for every }x,y\in V,$$
where $\varepsilon'$ tends to $0$, when $\varepsilon$ tends to $0$. Indeed, it is easy to see that if $x$ and $y$ are two elements of $\mathcal{P}_2(X)$ with $\hat{x}=\hat{y}$, then $$a_n=|\{g,g^2,\ldots,g^n\}x\cap \{g,g^2,\ldots,g^n\}y|/n \rightarrow 1$$ when $n$ goes to infinity. However,
$$|E'x\cap E'y|\geq (1-\varepsilon) a_n  n |E|=(1-\varepsilon) a_n |E'| \text{ for every }x,y\in V,$$
which implies the claim.

In order to show the existence of sets that satisfy the condition (\ref{cc2})
let us consider one of the easiest and most illustrative cases from which the most general case will be clear. Consider the case where $V$ is the set of all subsets of cardinality $2$  which are supported on $4$ points of natural numbers $n_1$, $n_2$, $n_3$ and $n_4$. We claim that if for every $\varepsilon>0$ there exists a set finite set $W\subset \mathbb{N}\times\mathbb{N}\times\mathbb{N}$, considered with multiplicities, such that

\begin{gather*}\label{ineq}|p_1(W)\cap p_2(W)\cap p_3(W)\cap \{p_1(w)+p_2(w):w\in W\}\cap  \{p_2(w)+p_3(w):w\in W\}\cap\\
\cap \{p_1(w)+p_2(w)+p_3(w):w\in W\}|\geq (1-\varepsilon) |W|,
\end{gather*}
then there is a finite subset $E$ of the group that satisfies the condition (\ref{cc2})
Here $p_i$ is the projection of $W$ onto the $i$-th coordinate (the sets $p_i(W)$ are considered with multiplicities). Indeed, fix natural numbers $r_1$, $r_2$, $r_3$, and let $A$ be a set of natural numbers which is almost invariant by multiplication by $r_i$, $r_i+r_j$ and $r_1+r_2+r_3$  for all $i\neq j$, then the set $W=\{(r_1a,r_2a,r_3a):a \in A\}$ satisfies the condition.

Now the set $E'$ is constructed as follows. To each element $(w_1,w_2,w_3)$ in $W$ we associate an element of the group that sends $n_1$ to $0$, $n_2$ to $w_1$, $n_3$ to $w_1+w_2$ and $n_4$ to $w_1+w_2+w_3$. This can be arranged because the group acts strongly transitively on $X$. The set $E'$ is chosen to be the set of all elements in the group associated to the elements of $W$. It is clear that the intersection of projections above guarantees the large intersection of the sets $E'x$ and $E'y$. For example, if $x=\{0,1\}$ and $y=\{1,2\}$, then the fact that the intersection of $p_1(W)$ and $p_2(W)$ is large guarantees that the intersection of $E'x$ and $E'y$ is large.

The case when the elements of $V$ are supported on larger sets, say on a set of cardinality $n$, follows from existence for every $\varepsilon$ of a set $W\subset \mathbb{N}\times\ldots\times\mathbb{N}$ (where the product is taken $n-1$ times) which satisfy
\begin{gather*}\label{ineq}|p_1(W)\cap\ldots\cap p_{n-1}(W)\cap\bigcap\limits_{1\leq i\leq{n-2}} \{p_i(w)+p_{i+1}(w):w\in W\}\cap \\\bigcap\limits_{1\leq i\leq{n-3}} \{p_i(w)+p_{i+1}(w)+p_{i+2}(w):w\in W\}\cap \ldots\\\ldots \cap \{p_1(w)+p_2(w)+\ldots p_{n-1}(w):w\in W\}|\geq (1-\varepsilon) |W|,
\end{gather*}
Such sets exist and can be chosen of the form $W=\{(r_1a,r_2a,\ldots,r_{n-1}a):a \in A\}$, where $A$ is a set of natural numbers invariant under multiplication of corresponding sums of $r_i$.
\end{proof}

The theorem below can be proved in a similar way.

\begin{theorem} Let a discrete group $G$ act strongly transitively on a totally ordered set $X$ in an order preserving way. Assume that there is a sequence of elements $g_n$ in $G$ which admits arbitrarily large orbits. Then the action is $Liouville$.
\end{theorem}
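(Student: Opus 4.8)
The plan is to verify condition~(\ref{cond4}) of Theorem~\ref{thm1} for the action of $G$ on $X$; this action is transitive because it is strongly transitive, so Theorem~\ref{thm1} then yields the Liouville property (indeed with a non-degenerate symmetric measure). So fix $\varepsilon>0$ and a finite set $F=\{x_1<x_2<\dots<x_k\}\subseteq X$ with $k\ge 2$ (the case $k\le 1$ being vacuous), and put $N:=\lceil 2(k-1)/\varepsilon\rceil$. The goal is to exhibit a finite $E\subseteq G$ with $|Ex_i\Delta Ex_j|\le\varepsilon|E|$ for all $i,j$.

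First I would use the hypothesis to pick an element $g:=g_m$ whose orbit has at least $N+k$ points, together with a point $w$ on that orbit; then $w,gw,\dots,g^{N+k-1}w$ are pairwise distinct, and, after replacing $g$ by $g^{-1}$ if necessary, the fact that the action is order preserving gives $w<gw<\dots<g^{N+k-1}w$. By strong transitivity choose $h\in G$ with $h(F)=\{w,gw,\dots,g^{k-1}w\}$; since $h$ preserves the order, $hx_i=g^{i-1}w$. Now set
\[
E:=\{\,gh,\;g^2h,\;\dots,\;g^Nh\,\}\subseteq G,
\]
which has exactly $N$ elements since $g$ has order greater than $N$. A direct computation gives, for $i\le j$,
\[
Ex_i=\{g^{i}w,g^{i+1}w,\dots,g^{\,i+N-1}w\},\qquad Ex_j=\{g^{j}w,\dots,g^{\,j+N-1}w\},
\]
which are genuine $N$-element subsets of $X$ (all exponents lie in $\{1,\dots,N+k-1\}$), so that $|Ex_i\Delta Ex_j|\le 2(j-i)\le 2(k-1)\le\varepsilon N=\varepsilon|E|$. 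This is exactly condition~(\ref{cond4}), and the theorem follows.

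The construction is essentially the $n=1$ specialization of the proof of Theorem~\ref{thm2}: strong transitivity is used to place the test configuration $F$ in standard position, and the element $g$ of large orbit plays the role of the ``translation'' $g(x)=x+1$ used there to spread the sets $Ex$ and $Ey$ into overlapping intervals; the additive-combinatorial ingredient (a set of integers almost invariant under prescribed sums) degenerates in this case, so nothing is needed beyond the interval structure of a single orbit. The point requiring attention --- and the reason the hypothesis on large orbits is imposed --- is that this orbit must be long compared with $(k-1)/\varepsilon$, so that the iterates $g^a(hF)$, $1\le a\le N$, genuinely overlap in an interval-like pattern rather than wrap around the orbit; I do not anticipate any serious obstacle here. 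In particular this argument reproves, and generalizes from Thompson's group $F$ to arbitrary strongly transitive order-preserving actions with arbitrarily large orbits, the Liouville property of the action on dyadic rationals established in \cite{JZ}.
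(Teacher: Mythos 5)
Your proof is correct and is essentially the argument the paper intends: the paper gives no details beyond ``can be proved in a similar way'' to Theorem~\ref{thm2}, and your construction (strong transitivity plus order preservation to place $F$ as consecutive points $w,gw,\dots,g^{k-1}w$ of a long orbit, then translation by $g,\dots,g^N$ to produce the co-F\o lner sets of condition~(\ref{cond4})) is exactly the $n=1$ specialization of that proof, with the quantitative details correctly filled in.
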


\section{n-Liouville actions and open problems}
Let us first consider the case $n=3$ and state the reformulation of $3$-Liouville property. The following theorem is a straightforward adaptation of the first part of the proof of Theorem \ref{thm2}.
\begin{theorem}\label{tl} The action of Thompson group $F$ or the group $F_{\mathbb{R}}$ on dyadic rationals is $3$-Liouville if and only if for every $n$ and $\varepsilon>0$ there exists a set $W\subset \mathbb{N}\times\ldots \times \mathbb{N}$ (n-fold product) such that 

\begin{gather*}|\bigcap_{1\leq k\leq n; 1\leq i\leq k; k+1\leq m\leq n}\{(p_i(x)+p_{i+1}(x)+\ldots+p_{k}(x),p_{k+1}(x)+\ldots+p_{m}(x)):x\in W\}|\\ \geq (1-\varepsilon) |W|.
\end{gather*}

\end{theorem}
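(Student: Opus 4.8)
The plan is to mimic, almost word for word, the reduction carried out in the first half of the proof of Theorem~\ref{thm2}; the only substantive difference is that a $3$-element subset of $\mathbb{N}$ is determined up to translation by a \emph{pair} of natural numbers rather than by a single one, so the additive data will live in $\mathbb{N}\times\mathbb{N}$ instead of in $\mathbb{N}$, which is exactly why pairs $(p_i+\dots+p_k,\,p_{k+1}+\dots+p_m)$ replace the single sums $p_i+\dots+p_k$.

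First I would apply Theorem~\ref{thm1} to the (transitive, by strong transitivity) action of $G$ on $\mathcal{P}_3(X)$: the action on the dyadic rationals is $3$-Liouville if and only if, for every $\varepsilon>0$ and every finite $F\subseteq\mathcal{P}_3(X)$, there is a finite $E\subset G$ (counted with multiplicity) with $|Ex\,\Delta\,Ey|\le\varepsilon|E|$ for all $x,y\in F$, where $Ex=\{g(x):g\in E\}$. As in Theorem~\ref{thm2} one may pre-compose with a dilation $t\mapsto 2^i t$ (for Thompson's $F$ this is legitimate because $3$-Liouvilleness of its action on the dyadics of $[0,1]$ coincides with that of the $F_{\mathbb{R}}$-action, after conjugating into a copy $G_n$) and assume that the supports of the members of $F$ all lie in a set $\{a_1<\dots<a_{n+1}\}\subset\mathbb{N}$ and that all relevant group elements map these points into $\mathbb{N}$. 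For $z=\{z_1<z_2<z_3\}\subset\mathbb{N}$ put $\widehat{z}=(z_2-z_1,z_3-z_2)\in\mathbb{N}\times\mathbb{N}$ and $\widehat{Q}=\{\widehat{z}:z\in Q\}$ with multiplicity. I would then observe, exactly as in Theorem~\ref{thm2}, that the ``shape'' condition ``$|\widehat{Ex}\,\Delta\,\widehat{Ey}|\le\varepsilon|E|$ for all $x,y\in F$'' is equivalent to the genuine one: it is trivially implied by it, and conversely, if it holds, pre-composing $E$ with $\{\tau,\tau^2,\dots,\tau^N\}$, where $\tau(t)=t+1$, forces the translates of equal shape to overlap, so the enlarged set satisfies the genuine inequality with a constant tending to $0$ as $N\to\infty$ (the required $N$ depends only on the finitely many values $g(a_i)$, $g\in E$).

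Next I would record the dictionary with additive combinatorics. Fix $F=\mathcal{P}_3(\{a_1,\dots,a_{n+1}\})$; a general $F$ is contained in one of these after padding the point set. To $g\in E$ attach $w(g)=(g(a_2)-g(a_1),\dots,g(a_{n+1})-g(a_n))\in\mathbb{N}^n$ and let $W=\{w(g):g\in E\}$ with multiplicity, so $|W|=|E|$. For $x=\{a_\alpha,a_\beta,a_\gamma\}$ with $\alpha<\beta<\gamma$ one has $\widehat{g(x)}=\bigl(p_i(w(g))+\dots+p_k(w(g)),\,p_{k+1}(w(g))+\dots+p_m(w(g))\bigr)$ with $(i,k,m)=(\alpha,\beta-1,\gamma-1)$, and as $x$ ranges over $F$ the triple $(i,k,m)$ ranges over exactly the index set in the statement; thus $\widehat{Ex}$ is precisely the multiset $S_{i,k,m}:=\{(p_i(v)+\dots+p_k(v),p_{k+1}(v)+\dots+p_m(v)):v\in W\}$. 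The shape condition therefore says that the $\binom{n+1}{3}$ multisets $S_{i,k,m}$ pairwise overlap in at least $(1-\varepsilon)|W|$ elements, and a one-line union bound turns this into $\bigl|\bigcap S_{i,k,m}\bigr|\ge(1-\tbinom{n+1}{3}\varepsilon)|W|$, i.e. the displayed inequality after renaming $\varepsilon$. Conversely, given $W\subset\mathbb{N}^n$ satisfying the displayed inequality, strong transitivity provides, for each $v\in W$, an element $g_v\in G$ with $g_v(a_i)=v_1+\dots+v_{i-1}$ for all $i$; with $E=\{g_v:v\in W\}$ one gets $\widehat{Ex}=S_{i,k,m}$ again, so the displayed inequality forces pairwise overlaps $\ge(1-\varepsilon)|W|$ among the $S_{i,k,m}$, i.e. the shape condition, which by the previous paragraph upgrades to the genuine co-F\o lner condition. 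Running this for all $n$ and all $\varepsilon$ yields the co-F\o lner condition for all finite $F\subseteq\mathcal{P}_3(X)$, hence $3$-Liouvilleness, and conversely.

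I do not expect a real obstacle inside this argument: it is a bookkeeping translation, as the paper already signals. The genuine difficulty, which this theorem does \emph{not} resolve, is deciding whether the sets $W$ exist for $n\ge 3$: the product Ansatz $W=\{(r_1a,\dots,r_na):a\in A\}$ that settled the cases $n\le 2$ of Theorem~\ref{thm2} breaks down here, because matching one pair of linear forms in $a$ against another imposes an extra equation on the ratios of the $r_i$, so a different construction (or an impossibility proof) would be required.
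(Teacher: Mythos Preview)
Your proposal is correct and is precisely the ``straightforward adaptation of the first part of the proof of Theorem~\ref{thm2}'' that the paper invokes in lieu of a written proof: you reduce the co-F\o lner condition on $\mathcal{P}_3(X)$ to a shape condition via the translation trick with $\tau(t)=t+1$, then set up the same dictionary between $E$ and $W\subset\mathbb{N}^n$ via strong transitivity, the only change being that $\widehat{z}$ is now the pair of consecutive gaps rather than a single number. Your extra remarks (the union bound converting pairwise overlaps to a common intersection, and the explanation of why the product Ansatz $W=\{(r_1a,\dots,r_na):a\in A\}$ fails here) are welcome clarifications that the paper leaves implicit.
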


The general case of $n$-Liouville property is a modification of the intersection above to the intersection of $n-1$ tuples of natural numbers. One of the approaches to prove $n$-Liouville property is the following conjecture.

\begin{conjecture} There exists an amenable group that act strongly transitively on a totally ordered set $X$ in an order preserving way.
\end{conjecture}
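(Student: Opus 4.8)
First note the constraints the statement imposes on any candidate. A group acting faithfully on a totally ordered set by order-preserving bijections is left-orderable, hence torsion-free --- an order-preserving permutation of finite order is the identity, since $g(x)>x$ forces $g^{n}(x)>x$ for all $n$ --- so no locally finite group works and the ``easy'' amenable examples are out. Strong transitivity together with order-preservation forces the order to be dense with no endpoints, so (seeking a countable example) one wants a countable torsion-free amenable subgroup of $\mathrm{Aut}(\mathbb{Q},\le)$ that is \emph{ultrahomogeneous}, i.e.\ realizes every order-isomorphism between finite subsets of $\mathbb{Q}$. Thompson's group $F$ is precisely such a group except that its amenability is open --- indeed, were $F$ amenable it would itself witness the conjecture --- so the conjecture is a priori no harder than a famous problem, and the plan below should be read in that light.

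The plan is to obtain the group as (a subgroup of) a topological full group, so that amenability is automatic, and to install the order through a ``circle model''. Fix an irrational $\alpha$ and let $(Y,\sigma)$ be the Sturmian subshift coding the rotation $R_{\alpha}$ of $S^{1}=\mathbb{R}/\mathbb{Z}$; by the Juschenko--Monod theorem the topological full group $G_{0}=[[\sigma]]$ is amenable (and infinite, finitely generated). The factor map $\pi\colon Y\to S^{1}$ is injective off the doubled orbit of $0$ and endows $Y$ with a $\sigma$-invariant cyclic order; let $X\subset Y$ be one $\sigma$-orbit, a dense cyclically ordered set acted on by $G_{0}$. Two things must be checked: (i) $G_{0}$, or a suitably large subgroup of it, acts by cyclic-order-preserving maps; and (ii) the action on $X$ is strongly transitive. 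Step (ii) goes as in the proof of Theorem~\ref{thm2}: given finite tuples $P=\{\sigma^{a_{1}}x_{0},\dots,\sigma^{a_{k}}x_{0}\}$ and $Q=\{\sigma^{b_{1}}x_{0},\dots,\sigma^{b_{k}}x_{0}\}$ of the same cyclic type, choose pairwise disjoint clopen neighbourhoods $U_{i}\ni\sigma^{a_{i}}x_{0}$ so small that the $\sigma^{b_{i}-a_{i}}(U_{i})$ are pairwise disjoint and set $g=\sigma^{b_{i}-a_{i}}$ on $U_{i}$; since the unique invariant measure $\mu$ is $\sigma$-invariant, $\mu(\bigsqcup_{i}U_{i})=\mu(\bigsqcup_{i}\sigma^{b_{i}-a_{i}}(U_{i}))$, so the two clopen complements have equal measure, and by the homogeneity of minimal Cantor systems (Glasner--Weiss, Giordano--Putnam--Skau) some full-group element carries one onto the other and patches with $g$ into an element of $[[\sigma]]$ realizing $P\mapsto Q$. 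Finally, one turns the cyclic order into a genuine total order by passing from $G_{0}$ to the stabilizer $G=(G_{0})_{*}$ of a point $*\in X$: this $G$ acts on $X\setminus\{*\}$, now linearly ordered, still order-preservingly and still strongly transitively --- one simply runs the construction in (ii) with $*$ assigned shift exponent $0$ --- and is amenable as a subgroup of $G_{0}$, which is the assertion.

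The hard part, and I expect the genuine obstacle, is step (i): topological full groups of minimal subshifts do \emph{not} act by cyclic-order-preserving maps. Already an element interchanging two adjacent clopen arcs reverses the cyclic order of every triple straddling the swap, and more generally a ``piecewise rotation'' of the circle reorders the arcs and so reverses orientation on many triples; in fact a telescoping argument shows that for the Sturmian system the cyclic-order-preserving elements of $[[\sigma]]$ are exactly the powers of $\sigma$, giving only $\langle\sigma\rangle\cong\mathbb{Z}$ --- hopelessly too few to act strongly transitively. So the $[[\sigma]]$ route can work only with a cleverer minimal system (or a cleverer subgroup) whose full-group-type elements genuinely preserve an order, and it is unclear whether such a thing exists; given that amenability of $F$ would already settle the conjecture, no short argument is to be expected. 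The alternative I would try is to build $G$ directly by transfinite recursion as an increasing union $G_{1}\le G_{2}\le\cdots$ of finitely generated members of a fixed amenable class inside $\mathrm{Aut}(\mathbb{Q},\le)$, adjoining at stage $n$ a generator that realizes the $n$-th partial order-isomorphism between finite subsets of $\mathbb{Q}$; there the obstacle is that enlarging an amenable group by one generator typically destroys amenability, so the generators would have to be chosen with enough internal structure --- normalising or commuting suitably with what is already built --- for the union to stay amenable.
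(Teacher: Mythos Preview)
The statement you are attempting to prove is presented in the paper as an open \emph{conjecture}; the paper offers no proof and in fact uses it only conditionally (``If the conjecture is true then\ldots''). So there is nothing in the paper to compare your argument against, and your task reduces to whether what you have written is an actual proof.

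It is not, and to your credit you say so yourself. The topological-full-group approach collapses exactly where you locate the collapse: an element of $[[\sigma]]$ is locally a power of $\sigma$, hence on the circle model it is an interval exchange, and an interval exchange that preserves the cyclic order is a genuine rotation; since the only rotations lying in $[[\sigma]]$ are the powers of $\sigma$, the cyclic-order-preserving subgroup is $\mathbb{Z}$ and cannot be strongly transitive. Your step~(ii) patching argument is fine as far as it goes, but it produces elements that scramble the cyclic order, so it never feeds into step~(i). The fallback ``build $G$ as an increasing union inside $\mathrm{Aut}(\mathbb{Q},\le)$'' is not an argument but a wish: nothing you write controls amenability under adjoining the new generator, and indeed the most natural way to realise an arbitrary finite partial order-isomorphism is with a Thompson-type element, which just reinstates the original problem. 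In short, your write-up is a clear and accurate explanation of \emph{why} the conjecture is hard, not a proof of it; the paper's authors are in the same position.
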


If the conjecture is true then by \cite{JZ} we can deduce that the action of any group (in particular, Thompson group $F$) that satisfy the theorem is $n$-Liouville for any $n$. Note that there are examples of amenable groups that act strongly transitively on sets and admit elements of infinite orbits, for example those coming from topological full groups \cite{JM}, \cite{JNS}.

We are tempted to conjecture that the sets in the Theorem \ref{tl}  do not exist. This would imply non-amenability of Thompson group F. Even the following seeming easy subproblem is currently out of our reach.
\begin{problem}Is it true that for every  $\varepsilon>0$ there is a finite subset $W$ of $\mathbb{N}\times\mathbb{N} \times\mathbb{N}$, such that
\begin{gather*}\label{con1}|\{(p_1(x),p_2(x)):x\in V\}\cap \{(p_2(x),p_3(x)):x\in V\}\cap\\ \{(p_1(x)+p_2(x),p_3(x)):x\in V\}\cap \{(p_1(x),p_2(x)+p_3(x)):x\in V\}| \geq (1-\varepsilon) |V|\end{gather*}
\end{problem}
The problem above is equivalent to the existence of sets from Theorem \ref{thm1} (\ref{cond4}) when $V$ is chosen to be the set of all subsets of cardinality $3$ supported on $4$ points. 

After  proper reordering of the set $W$ the problem above implies the following problem 

\begin{problem} For every $\varepsilon>0$ there must exist a sequence $a_1,\ldots, a_n$ such that
\begin{align*}|\{(a_i,a_{i+1}):1\leq i\leq n-1\}&\cap \{(a_j+a_{j+1},a_{j+2}):1\leq j\leq n-2\}\\&\cap   \{(a_k,a_{k+1}+a_{k+2}):1\leq k\leq n-2\}|\\&\geq (1-\epsilon)n.
\end{align*}
\end{problem}

The following problem should likely have a negative solution.
\begin{problem}
Assume that a group $G$ acts faithfully on a set $X$ and assume that this action is $n$-Liouville for all $n\in \mathbb{N}$. Is $G$ amenable?
\end{problem}

\end{document}